\newcommand{\E}{\mathbf E\,}
\newcommand{\R}{\mathbb R}
\newcommand{\N}{\mathbb N}
\newcommand{\I}{\mathbb I}
\newcommand{\Prob}[1]{\mathbf{P}\left\{#1\right\}}
\newcommand{\Rr}{\mathbb{R}^r}
\newcommand{\Nr}{\mathbb{N}^r}
\newcommand{\one}{\mathbf{1}}
\newcommand{\sT}{\mathcal{T}}
\theoremstyle{plain} \newtheorem{theorem}{Theorem}[section]
\theoremstyle{plain} 
\theoremstyle{plain} \newtheorem{lemma}[theorem]{Lemma}
\theoremstyle{plain} \newtheorem{corollary}[theorem]{Corollary}
\theoremstyle{definition} 
\theoremstyle{definition} 
\theoremstyle{remark} \newtheorem{remark}[theorem]{Remark}
\theoremstyle{remark} 
\def\vect#1{{\boldsymbol{#1}}}
\def\n{\vect{n}}
\def\k{\vect{k}}
\def\m{\vect{m}}
\def\x{\vect{x}}
\def\b{\vect{b}}
\def\var{\operatorname{var}}
\def\increment#1{\Delta[#1]}
\def\incrementbigg#1{\Delta\bigg[#1\bigg]}
\def\eps{\varepsilon}
\def\field#1{\{#1_{\n},\n\in\Nr\}}
\def\bk{b_{\k}}
\def\bn{b_{\n}}
\def\xk{x_{\k}}
\def\xn{x_{\n}}
\def\Sn{S_{\n}}\def\Sk{S_{\k}}
\def\Cn{C_{\n}}
\def\Zk{Z_{\k}}\def\Zn{Z_{\n}} 
\def\an{a_{\n}}\def\ak{a_{\k}}
\def\Seq#1#2#3{\left\{#1_#2^{\csname#3\endcsname}\,\right\}_{#2=1}^\infty}
\def\YI#1{Y_{1,\csname#1\endcsname}}
\def\Yi#1{Y_{i,\csname#1\endcsname}}
\def\tr#1#2{#1^{#2}}
\let\phi\varphi
\newlength{\querylen}
\begin{document}

\title{Moment conditions in strong laws of large numbers for
    multiple sums and random measures\thanks{Supported by
      Swiss National Science Foundation Scopes Programme Grant
      IZ7320\_152292}}

\author{Oleg Klesov\footnote{
Department of Mathematical Analysis and Probability Theory,
National Technical University of Ukraine (KPI),
Peremogy avenue, 37,
Kyiv, 03056,
Ukraine, klesov@matan.kpi.ua} and 
Ilya Molchanov\footnote{
University of Bern,
 Institute of Mathematical Statistics and Actuarial Science, 
 Sidlerstrasse 5,
 CH-3012 Bern, 
ilya.molchanov@stat.unibe.ch}}

\date{}

\maketitle

\begin{abstract}
  The validity of the strong law of large numbers for multiple sums
  $\Sn$ of independent identically distributed random variables $\Zk$,
  $\k\leq\n$, with $r$-dimensional indices 
  is equivalent to the integrability of $|Z|(\log^+|Z|)^{r-1}$, where
  $Z$ is the generic summand.  We consider the strong law of large
  numbers for more general normalizations, without assuming that the
  summands $\Zk$ are identically distributed, and prove a multiple sum
  generalization of the Brunk--Prohorov strong law of large
  numbers. In the case of identical finite moments of order $2q$ with
  integer $q\geq1$, we show that the strong law of large numbers holds
  with the normalization $(n_1\cdots n_r)^{1/2}(\log n_1\cdots\log
  n_r)^{1/(2q)+\eps}$ for any $\eps>0$.

  The obtained results are also formulated in the setting of ergodic
  theorems for random measures, in particular those generated by
  marked point processes.
\end{abstract}

\section{Introduction}
\label{sec:introduction}

Let $r\ge1$ be an integer number and let $\Nr$ denote the set of
$r$-dimensional vectors with positive integer coordinates.  Elements
of $\Nr$ are denoted by $\k$, $\n$ etc.  The inequality $\k\le\n$ is
defined coordinatewisely, that is $k_i\le n_i$, $1\le i\le r$, where
$\k=(k_1,\dots,k_r)$ and $\n=(n_1,\dots,n_r)$. Denote
$|\n|=n_1\cdots n_r$. Then, $|\n|\to\infty$ means that the
maximum of all coordinates of $\n$ converges to infinity and so is
called max-convergence or product convergence, see
\cite{kles14}. Furthermore, $\n\to\infty$ means that all components of
$\n$ converge to infinity, that is $\min(n_1,\dots,n_r)\to\infty$, it
is called the min-convergence in \cite{kles14}.

Consider an array $\field b$ of positive numbers indexed by $\Nr$ such
that $\bn\to\infty$ as $|\n|\to\infty$. Define partial sums of
random variables $\field Z$ by
\begin{displaymath}
  \Sn=\sum_{\k\leq\n} \Zk,\quad \n\in\Nr.
\end{displaymath}
The random field $\field Z$ is said to satisfy the strong law of large
numbers with the normalization $\field b$ if $\Zn$ is integrable for
all $\n$ and 
\begin{equation}
  \label{eq:slln}
  \frac{1}{\bn}(\Sn-\E\Sn)\to0 \qquad\text{a.s. as }\;
  |\n|\to\infty.
\end{equation}
If all $\Zn$'s are centered or are not integrable, the validity of the
strong law of large numbers means that
\begin{equation}
  \label{eq:slln:E=0}
  \frac{1}{\bn}\Sn\to0 \qquad\text{a.s. as }\;
  |\n|\to\infty.
\end{equation}
It is easy to see that $\bn$ should grow faster than $\sqrt{\n}$. If
$\field Z$ are independent copies of a centered random variable $Z$,
then \eqref{eq:slln:E=0} for $\bn=|\n|$ becomes the strong law of
large numbers for multiple sums, which holds if and
only if $\E[|Z|(\log^+|Z|)^{r-1}]<\infty$, see \cite{smy73}. Here
$\log^+t$ denotes the positive part of $\log t$. If $\bn$ grows faster
than $|\n|$, the corresponding results are variants of the
Marcinkiewicz--Zygmund law. In this paper we present a whole spectrum
of such results exploring relations between the strength of the moment
conditions and the growth rate of the sequence of normalising
constants. In particular, we show that imposing sufficiently strong
moment assumptions makes it possible to bring the normalising factors
to $\bn=|\n|^{1/2}(\log n_1\cdots\log n_r)^\eps$ for any $\eps>0$.  

The strong law of large numbers was used in \cite{smy75} to derive the
ergodic theorem for sums generated by marked point processes. We first
provide an alternative proof (that gives a stronger result under
weaker conditions) of the strong law of large numbers claimed in
\cite{smy75} to follow from the multivariate analogue of the Kronecker
lemma. As we show in Section~\ref{sec:kron-lemma-mult-2}, this lemma
holds only in the nonnegative case. Indeed, we provide a
counterexample to a ``natural'' generalization of the Kronecker lemma
which invalidates the proof of \cite[Th.~2.1.1]{smy75}.

Section~\ref{sec:conv-mult-sums} contains several strong laws of large
numbers for multiple sums of not identically distributed random
variables that combine moment conditions on the summands with not so
fast growing normalising constants. Along the same line, we generalize
the Brunk--Prohorov criterion for the validity of the strong law of
large numbers known for the case of univariate sums, see
\cite{brun48,proh50}. In case of i.i.d. summands, the conditions
simplify substantially. 

Section~\ref{sec:an-ergodic-theorem} rephrases the results from
Section~\ref{sec:conv-mult-sums} for random measures, in particularly,
those generated by marked point processes.

\section{Strong laws of large numbers for multiple sums}
\label{sec:conv-mult-sums}

\subsection{Conditions on moments of order up to $2$}
\label{sec:cond-moments-order}

The field $\field b$ is said to be monotonic if $\bk\leq\bn$ for
$\k\leq\n$ coordinatewisely.  Define the increments of $\field b$ by
\begin{displaymath}
  \Delta[\bn]=\sum_{\m=(m_1,\dots,m_r)\in\{0,1\}^r} 
  (-1)^{m_1+\cdots+m_r} \b_{\n-\m},
\end{displaymath}
where the array $\field b$ is extended for indices with
\emph{non-negative} coordinates by letting $\bn=0$ if at least one of
the coordinates of $\n$ vanishes. The non-negativity of
$\Delta[\bn]$ for all $\n$ is a stronger condition than the
monotonicity of $\field b$.

The following Theorem~\ref{thr:211} appears as \cite[Th.~2.1.1]{smy75} and was
announced first in \cite{smy73}. However, it was formulated in the
particular case $\n\to\infty$ and assuming the non-negativity of
increments $\increment{\bn}$ for the weights. In order to deduce the
strong law of large numbers from the convergence of random multiple
series, it relied on the Kronecker lemma for multiple sums that was
mentioned as a ``simple generalization'' of the univariate case in
\cite[p.~116]{smy75}. It will be explained in
Section~\ref{sec:kron-lemma-mult-2} that such a generalization holds
only assuming that the summands are non-negative, and so the proof of
\cite[Th.~2.1.1]{smy75} was not complete. We suggest an
alternative proof that derives the strong law of large numbers under
the max-convergence $|\n|\to\infty$, and for this it is unavoidable
to assume that
\begin{equation}
  \label{bn->infty}
  \text{$\bn\to\infty$ as $|\n|\to\infty$}
\end{equation}
instead of
$\n\to\infty$ in~\cite{smy75}. The one-dimensional case is considered
in \cite{faz:kles00}.

Note that the convergence of multiple series $\sum_{\n\in\Nr}\an$ is
always understood as the convergence of their partial sums
$\sum_{\k\le\n}\ak$ as $\n\to\infty$.

\begin{theorem}
  \label{thr:211}
  Assume that $\field b$ is monotonic. 
  Let $\phi$ be a positive even continuous function on $\R$ such that
  $x^{-1}\phi(x)$ is non-decreasing and $x^{-2}\phi(x)$ is
  non-increasing for $x>0$.  If $\field Z$ are independent
  centered random variables such that
  \begin{equation}
    \label{eq:r>1}
    \sum_{\k\in\Nr}\frac{\E\phi(\Zk)}{\phi(\bk)}<\infty,
  \end{equation}
  then the series $\sum_{\k\in\Nr}\Zk/\bk$ converges almost surely and
  \eqref{eq:slln} holds.
\end{theorem}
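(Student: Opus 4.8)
The plan is to carry out, in the multi-index setting, the classical truncation argument behind the one-dimensional version of this statement (cf.\ \cite{faz:kles00}), and then -- instead of appealing to a multiparameter Kronecker lemma, which as explained above fails for summands of variable sign -- to bound the normalized partial sums directly over the dyadic level sets of $\field b$, exploiting that~\eqref{bn->infty} makes those level sets finite. First I would truncate. Since $x^{-1}\phi(x)$ is non-decreasing and $\phi>0$, the function $\phi$ is non-decreasing on $(0,\infty)$, and (as $\phi$ is even) $\phi(\Zk)\ge\phi(\bk)$ on the event $\{|\Zk|>\bk\}$; hence, putting $\Yk:=\Zk\ind(|\Zk|\le\bk)$, one gets $\Prob{\Zk\neq\Yk}\le\E\phi(\Zk)/\phi(\bk)$, so by~\eqref{eq:r>1} and the Borel--Cantelli lemma, almost surely $\Zk=\Yk$ for all but finitely many $\k$; consequently $\bn^{-1}\bigl(\Sn-\sum_{\k\le\n}\Yk\bigr)\to0$ a.s., the numerator staying bounded while $\bn\to\infty$.

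Next I would record three moment estimates, each drawing on one hypothesis on $\phi$ (and, in the third, on $\E\Zk=0$): from $x^{-2}\phi(x)$ non-increasing, $\Zk^2/\bk^2\le\phi(\Zk)/\phi(\bk)$ on $\{|\Zk|\le\bk\}$, whence $\sum_\k\var(\Yk)/\bk^2\le\sum_\k\E\phi(\Zk)/\phi(\bk)<\infty$; from $x^{-1}\phi(x)$ non-decreasing, $|\Zk|/\bk\le\phi(\Zk)/\phi(\bk)$ on $\{|\Zk|>\bk\}$, so $|\E\Yk|/\bk=\bk^{-1}\bigl|\E[\Zk\ind(|\Zk|>\bk)]\bigr|\le\E\phi(\Zk)/\phi(\bk)$, giving $\sum_\k|\E\Yk|/\bk<\infty$. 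Since the variables $(\Yk-\E\Yk)/\bk$ are independent, centered and, by the first estimate, have summable variances, the multiparameter form of Kolmogorov's one-series theorem (which follows from a multiparameter Kolmogorov maximal inequality -- itself obtained by iterating the one-dimensional inequality over the $r$ coordinates -- together with the observation that the net of partial sums is then a.s.\ Cauchy as $\n\to\infty$) shows that $\sum_\k(\Yk-\E\Yk)/\bk$ converges a.s.; adding the absolutely convergent series $\sum_\k\E\Yk/\bk$ and invoking the truncation step yields the claimed a.s.\ convergence of $\sum_\k\Zk/\bk$.

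It remains to derive~\eqref{eq:slln}, and here I would \emph{not} use a Kronecker lemma for the signed field. Decompose $\bn^{-1}\Sn=\bn^{-1}\sum_{\k\le\n}(\Yk-\E\Yk)+\bn^{-1}\sum_{\k\le\n}\E\Yk+\bn^{-1}\bigl(\Sn-\sum_{\k\le\n}\Yk\bigr)$; the last summand tends to $0$ by the truncation step, and the middle one is dominated by $\bn^{-1}\sum_{\k\le\n}|\E\Yk|$, which vanishes because $\sum_\k|\E\Yk|/\bk<\infty$, $\field b$ is monotonic with $\bn\to\infty$, and Kronecker's lemma \emph{is} valid for the nonnegative numbers $|\E\Yk|$ (see Section~\ref{sec:kron-lemma-mult-2}; equivalently this follows from the dyadic estimate below). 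The heart of the matter is the first summand. With $\xi_\k:=\Yk-\E\Yk$, $T_\n:=\sum_{\k\le\n}\xi_\k$ and, for $l\ge1$, $G_l:=\{\k\in\Nr:\bk\le2^l\}$, assumption~\eqref{bn->infty} makes each $G_l$ finite and monotonicity of $\field b$ makes it a lower set, so whenever $\bn\le2^l$ the partial sum $T_\n$ involves only the $\xi_\k$ with $\k\in G_l$; hence the multiparameter maximal inequality applied to the finitely supported field $\bigl(\xi_\k\ind(\k\in G_l)\bigr)_{\k\in\Nr}$ gives
\[
  \Prob{\max_{\n\in G_l}|T_\n|>\eps\,2^{l-1}}\le\frac{C_r}{\eps^2\,4^{l-1}}\sum_{\k\in G_l}\E\xi_\k^2 .
\]
Splitting $G_l$ into the shells $\{2^{j-1}<\bk\le2^j\}$ with $j\le l$, on which $\E\xi_\k^2\le2^{2j}\,\E\xi_\k^2/\bk^2$, and summing a geometric series in $l$, bounds $\sum_l\Prob{\max_{\n\in G_l}|T_\n|>\eps\,2^{l-1}}$ by a constant multiple of $\sum_\k\var(\Yk)/\bk^2<\infty$. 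By the Borel--Cantelli lemma, almost surely $|T_\n|\le\eps\bn$ for every $\n$ outside the finite set $\{\k:\bk\le2^L\}$ once $L$ is large enough; letting $\eps=1/m\downarrow0$ along $m\in\N$ gives $\bn^{-1}T_\n\to0$ a.s.\ as $|\n|\to\infty$, which completes the proof.

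The step I expect to be the main obstacle is precisely this last one. One has to resist the temptation to pass from the a.s.\ convergence of $\sum_\k\Zk/\bk$ to~\eqref{eq:slln} via a ``natural'' Kronecker lemma for multiple sums -- that lemma is false for summands of variable sign, and this is exactly the gap in the proof of \cite[Th.~2.1.1]{smy75} -- and must instead run the dyadic-block maximal inequality above. It is here that the hypothesis $\bn\to\infty$ as $|\n|\to\infty$ (rather than merely as $\n\to\infty$) is indispensable: it is what forces the level sets $G_l$ to be finite, so that Borel--Cantelli and the multiparameter Kolmogorov inequality can be applied block by block. One should also be careful throughout about the distinction between max-convergence, in which~\eqref{eq:slln} is asserted, and the min-convergence used to define convergence of the multiple series $\sum_\k\Zk/\bk$.
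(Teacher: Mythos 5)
Your proof is correct and follows essentially the same route as the paper's: the same three truncation estimates (Petrov's inequalities applied to $\phi$), Borel--Cantelli, a multiparameter three-series argument for the a.s.\ convergence of $\sum_{\k}\Zk/\bk$, the nonnegative Kronecker lemma for the truncated means, and a maximal-inequality argument over the level sets of $\field b$ for the centered truncated sums. The only difference is presentational: you spell out explicitly the dyadic-block argument that the paper delegates to \cite[Cor.~8.1]{kles14} and \cite[Cor.~2.1]{kles98}.
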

\begin{proof}
  Given the conditions imposed on $\phi$, 
  it follows from the proof of \cite[Th.~6.4]{petrov95} that 
  \begin{align*}
    \Prob{|X|\ge b} &\le \frac {\E\phi(X)}{\phi(b)},
    \\
    \big|\E(X\one_{\{|X|<b\}})\big| &\le \frac {b}{\phi(b)}\E\phi(X),
    \\
    \E(X^2\one_{\{|X|<b\}}) &\le \frac {b^2}{\phi(b)} \E\phi(X).
  \end{align*}
  for each centered random variable $X$ with $\E\phi(X)<\infty$.
  Let $\tr Xt$ be the truncation of a
  random variable $X$ at the level $t>0$, namely $\tr
  Xt=X\one_{\{|X|<t\}}$.  
  Condition~\eqref{eq:r>1} together with the latter three inequalities
  imply that the following three series
  \begin{equation}
    \label{eq:3}
    \sum_{\n\in\Nr}\Prob{|\Zn|\ge \bn}, \qquad
    \sum_{\n\in\Nr}\left|\frac{\E\Zn^{\bn}}{\bn}\right|, \qquad \sum_{\n\in\Nr}
    \frac{\var\Zn^{\bn}}{\bn^2}
  \end{equation}
  converge.  We conclude from the convergence of the first series that
  \begin{equation}
    \label{io}
    \Prob{\Zn\ne\tr{\Zn}{\bn} \text{ infinitely often}}=0
  \end{equation}
  by the Borel--Cantelli lemma.  By \cite[Th.~5.7]{kles14},
  the convergence of the second and third series implies that $\sum
  \tr{\Zn}{\bn}/\bn$ converges almost surely, whence $\sum \Zn/\bn$
  converges almost surely in view of~\eqref{io}.

  Further, \cite[Cor.~8.1]{kles14}, \cite[Cor.~2.1]{kles98}, and
  convergence of the third series in~\eqref{eq:3} yield 
  \begin{equation}
    \label{SLLN-for-truncated}
    \frac1{\bn}\sum_{\k\le\n}(\Zk^{\bk}-\E \Zk^{\bk})\to0\qquad
    \text{a.s. as }\; |\n|\to\infty.
  \end{equation}
  The convergence of the second series in~\eqref{eq:3} together with
  a version of the Kronecker lemma (which is of independent interest
  and appears as Lemma~\ref{kron-gen} in the last section of the paper) imply that
  \begin{displaymath}
    \frac1{\bn}\sum_{\k\le\n}\left|\E\Zk^{\bk}\right|\to0 \qquad
    \text{as }\; |\n|\to\infty.
  \end{displaymath}
  Combining this result with~(\ref{SLLN-for-truncated}) and~(\ref{io})
  yields~\eqref{eq:slln}.
\end{proof}

Condition~\eqref{eq:r>1} is not optimal for i.i.d. $\field
Z$. For instance if $\bn=|\n|$, then it would require the
integrability of $Z_1(\log^+ |Z_1|)^{r+\eps}$ for some $\eps>0$, whereas
the optimal condition is the integrability of $Z_1(\log^+ |Z_1|)^{r-1}$,
see \cite{smy73}. 

The following result is obtained by letting $\phi(t)=|t|^\alpha$
in Theorem~\ref{thr:211}. 

\begin{corollary}
  \label{cor:5.1}
  Let $1\le\alpha\le2$.  If $\field b$ is monotonic and \eqref{bn->infty} holds,
 $\field Z$ are
  independent centered random variables with $\E|\Zn|^\alpha<\infty$ for
  all $\n$, and
  \begin{equation}
    \label{eq:5-first}
    \sum_{\n\in\Nr}\frac {\E|\Zn|^\alpha}{\bn^\alpha}<\infty,
  \end{equation}
  then \eqref{eq:slln:E=0} holds.
\end{corollary}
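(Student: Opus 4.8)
The plan is to deduce Corollary~\ref{cor:5.1} directly from Theorem~\ref{thr:211} by exhibiting a suitable auxiliary function $\phi$. First I would set $\phi(t)=|t|^\alpha$ for $1\le\alpha\le2$ and check that $\phi$ satisfies all the structural hypotheses of Theorem~\ref{thr:211}: it is positive (except at $0$, but this causes no trouble since all statements involve $\phi(\bk)$ with $\bk>0$), even, and continuous on $\R$; moreover $x^{-1}\phi(x)=x^{\alpha-1}$ is non-decreasing on $(0,\infty)$ precisely because $\alpha\ge1$, and $x^{-2}\phi(x)=x^{\alpha-2}$ is non-increasing there precisely because $\alpha\le2$. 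So the admissible range $1\le\alpha\le2$ is exactly what makes the monotonicity conditions on $\phi$ hold.

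Next I would observe that with this choice, condition~\eqref{eq:r>1} reads $\sum_{\k\in\Nr}\E|\Zk|^\alpha/\bk^\alpha<\infty$, which is precisely the hypothesis~\eqref{eq:5-first}. The integrability of $\phi(\Zn)=|\Zn|^\alpha$ for every $\n$ is assumed, and the summands $\field Z$ are independent and centered, so all hypotheses of Theorem~\ref{thr:211} are met. Applying the theorem then yields that $\sum_{\k\in\Nr}\Zk/\bk$ converges almost surely and that \eqref{eq:slln} holds. Since the $\Zn$ are centered, $\E\Sn=0$, so \eqref{eq:slln} becomes \eqref{eq:slln:E=0}, which is the assertion of the corollary.

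There is essentially no obstacle here: the corollary is a specialization, and the only point requiring a word of care is that Theorem~\ref{thr:211} is stated with the hypothesis that $\field b$ is monotonic but (as the surrounding discussion makes clear) also tacitly uses \eqref{bn->infty}, which is why the corollary lists it explicitly. The slight mismatch that $\phi(t)=|t|^\alpha$ vanishes at $t=0$ rather than being strictly positive everywhere is immaterial, since $\phi$ is only ever evaluated at the strictly positive normalizers $\bk$ and in the expectations $\E\phi(\Zk)$, where the value at $0$ does not affect finiteness; alternatively one may note that the three probability/moment inequalities quoted from \cite[Th.~6.4]{petrov95} hold for $\phi(t)=|t|^\alpha$ directly. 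Thus the proof is a one-line invocation of Theorem~\ref{thr:211} once the elementary monotonicity checks on $x^{\alpha-1}$ and $x^{\alpha-2}$ are recorded.
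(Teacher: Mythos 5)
Your proof is correct and is exactly the paper's argument: the paper obtains Corollary~\ref{cor:5.1} precisely by substituting $\phi(t)=|t|^\alpha$ into Theorem~\ref{thr:211}, with the range $1\le\alpha\le2$ guaranteeing the required monotonicity of $x^{-1}\phi(x)$ and $x^{-2}\phi(x)$. Your additional remarks (the vanishing of $\phi$ at $0$ being harmless, and centering turning \eqref{eq:slln} into \eqref{eq:slln:E=0}) are sound but routine, and the paper leaves them implicit.
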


\subsection{Brunk--Prohorov theorem for multiple sums}
\label{sec:brunk-proh-theor}

The following variant of the strong law of large numbers involves
higher moments.

\begin{theorem}
  \label{alpha>=2}
Let $q\geq1$ be an integer.  Assume that $\field b$ is monotonic and \eqref{bn->infty} holds,
  $\field Z$ are independent centered random variables with
  $\E\Zn^{2q}<\infty$ for all $\n$, and
  \begin{equation}
    \label{eq:4}
    \sum_{\n\in\Nr} \frac {\an}{\bn^{2q}}<\infty
  \end{equation}
  for 
  \begin{equation}
    \an=\incrementbigg{|\n|^{q-1}\sum_{\k\le\n}\E\Zk^{2q}}.
    \label{sequence-if-r>1}
  \end{equation}
  Then \eqref{eq:slln:E=0} holds. 
\end{theorem}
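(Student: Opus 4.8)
The plan is to mimic the classical Brunk--Prohorov argument: control the moments of the partial sums by a Marcinkiewicz--Zygmund-type inequality, convert this into a summable bound along a suitable subsequence of indices, and then fill in the gaps between subsequence points by a maximal inequality. First I would recall the moment inequality for sums of independent centered random variables: there is a constant $C_q$ (depending only on $q$) such that for independent centered $\Zk$ with finite $2q$-th moments,
\begin{displaymath}
  \E\Big(\sum_{\k\le\n}\Zk\Big)^{2q}
  \le C_q\Big(\sum_{\k\le\n}\E\Zk^{2q}
     + \Big(\sum_{\k\le\n}\E\Zk^2\Big)^{q}\Big).
\end{displaymath}
Here the index set $\{\k\le\n\}$ is just a finite set, so the ordinary one-dimensional Rosenthal/Marcinkiewicz--Zygmund inequality applies verbatim after enumerating the indices. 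Using Hölder (or Jensen) one bounds $\big(\sum_{\k\le\n}\E\Zk^2\big)^q\le |\n|^{q-1}\sum_{\k\le\n}\E\Zk^{2q}$, so that
\begin{displaymath}
  \E\Sn^{2q}\le C_q'\;|\n|^{q-1}\sum_{\k\le\n}\E\Zk^{2q}.
\end{displaymath}
The point of the definition~\eqref{sequence-if-r>1} of $\an$ is that $|\n|^{q-1}\sum_{\k\le\n}\E\Zk^{2q}=\sum_{\k\le\n}\ak$, i.e. $\an$ is exactly the (nonnegative, by construction of $\Delta$) increment field whose partial sums dominate $\E\Sn^{2q}$.

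Next I would invoke the machinery already set up in the paper for truncated sums. Observe that the bound $\E\Sn^{2q}\le C_q'\sum_{\k\le\n}\ak$ together with hypothesis~\eqref{eq:4}, namely $\sum_{\n}\an/\bn^{2q}<\infty$, is precisely the hypothesis needed to run the argument behind~\eqref{SLLN-for-truncated} in the proof of Theorem~\ref{thr:211}: by \cite[Cor.~8.1]{kles14} and \cite[Cor.~2.1]{kles98} applied with exponent $2q$ in place of $2$, the summability of $\sum_\n \E\Sn^{2q}/\bn^{2q}$ (which follows since this is bounded by $C_q'\sum_\n(\sum_{\k\le\n}\ak)/\bn^{2q}$, and by monotonicity of $\field b$ and a summation-by-parts / Abel argument this is controlled by $\sum_\n \an/\bn^{2q}<\infty$ — this is the Kronecker-type rearrangement that Lemma~\ref{kron-gen} handles) yields
\begin{displaymath}
  \frac1{\bn}\Sn\to0\qquad\text{a.s. as }\;|\n|\to\infty.
\end{displaymath}
Since the $\Zk$ are already centered there is no truncation step and no $\E\Sn$ correction, so this directly gives~\eqref{eq:slln:E=0}.

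The main obstacle is the passage from ``$\sum_\n \E\Sn^{2q}/\bn^{2q}<\infty$'' to the almost sure convergence~\eqref{eq:slln:E=0} under \emph{max}-convergence $|\n|\to\infty$, i.e. the multidimensional Kronecker/maximal-inequality step. In one dimension this is the classical Brunk argument (Chebyshev along $b_{2^j}$, then a maximal inequality between consecutive powers of $2$); in $r$ dimensions one must handle the fact that the partial-sum field is not monotone and that $\{\bn\}$ need only satisfy $\bn\to\infty$ along $|\n|\to\infty$, not along every coordinate. I would resolve this exactly as Theorem~\ref{thr:211} does: lean on the already-cited results \cite[Cor.~8.1]{kles14}, \cite[Cor.~2.1]{kles98} (which are stated for the $L^2$/variance case but whose proofs are insensitive to replacing the second moment by the $2q$-th moment, since the only input is the summability of the normalized moments of the partial sums), and on the version of the Kronecker lemma stated later as Lemma~\ref{kron-gen}, to pass from the summability condition~\eqref{eq:4} to the a.s. statement. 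The one genuinely new verification, beyond bookkeeping, is the Rosenthal-type moment bound and the Hölder step that together justify $\E\Sn^{2q}\le C_q'\sum_{\k\le\n}\ak$; everything after that reduces to the same lemmas invoked in the proof of Theorem~\ref{thr:211}.
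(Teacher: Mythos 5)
Your first step is fine: the bound $\E\Sn^{2q}\le C|\n|^{q-1}\sum_{\k\le\n}\E\Zk^{2q}=C\sum_{\k\le\n}\ak$ is exactly what the paper obtains (it iterates the Dharmadhikari--Fabian--Jogdeo inequality where you invoke Rosenthal, to the same effect). The gap is in the next step. You claim that monotonicity of $\field b$ plus an Abel/summation-by-parts argument yields $\sum_{\n}\bn^{-2q}\sum_{\k\le\n}\ak\le C\sum_{\n}\an\bn^{-2q}<\infty$, hence $\sum_\n\E\Sn^{2q}/\bn^{2q}<\infty$. This is false. Interchanging the order of summation, the left-hand side equals $\sum_{\k}\ak\sum_{\n\ge\k}\bn^{-2q}$, and monotonicity gives no bound of the form $\sum_{\n\ge\k}\bn^{-2q}\le C\bk^{-2q}$. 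Already in dimension $r=1$ with $q=1$ and $b_n=n$ the claim fails: take independent centered $\xi_k$ with $\E\xi_k^2=k/(\log(k+1))^2$, so that $a_k=\E\xi_k^2$ and $\sum_k a_k/k^2<\infty$ (the Kolmogorov condition), while $\E S_n^2\asymp n^2/(\log n)^2$ and $\sum_n\E S_n^2/n^2\asymp\sum_n(\log n)^{-2}=\infty$. Nor is this what Lemma~\ref{kron-gen} provides: that lemma is a Ces\`aro statement, $\bn^{-1}\sum_{\k\le\n}x_\k\to0$, not a summability statement for the normalized partial sums. So the intermediate fact your argument rests on, $\sum_\n\E\Sn^{2q}/\bn^{2q}<\infty$, is simply not a consequence of \eqref{eq:4}; if it were required, the Brunk--Prohorov theorem would be a much weaker statement.

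The correct route --- and the one the paper takes --- is the H\'ajek--R\'enyi/Fazekas--Klesov scheme: one needs a \emph{maximal} inequality whose right-hand side is the partial sum of the nonnegative increments, namely $\E\max_{\k\le\n}\Sk^{2q}\le C\sum_{\k\le\n}\ak$, obtained from Wichura's multiparameter Doob inequality combined with the moment bound above; then the general strong law \cite[Th.~8.3]{kles14} converts the single hypothesis $\sum\an/\bn^{2q}<\infty$ directly into \eqref{eq:slln:E=0}, without ever passing through $\sum_\n\E\Sn^{2q}/\bn^{2q}$. The paper also has to work on the level sets $A_t=\{\n:\bn\le 2^t\}$ and replace $\Zk$ by $\Zk\one_{\{\k\in A_t\}}$ so that the maximal inequality applies over the non-rectangular index sets on which $\bn$ is controlled; your sketch does not address this point. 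Finally, your appeal to \cite[Cor.~8.1]{kles14} and \cite[Cor.~2.1]{kles98} misreads what they deliver in the proof of Theorem~\ref{thr:211}: there they are applied to the summability of the normalized moments of the individual summands (the increments), not of the partial sums $\Sn$.
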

\begin{proof}
  An analogue of Doob's inequality 
  for multiple sums \cite{wic69} yields that 
  \begin{displaymath}
    \E\max_{\k\le\n} \Sk^{2q}
    \le C'\E\Sn^{2q}
    \le C|\n|^{q-1}\sum_{\k\le\n}\E \Zk^{2q}
  \end{displaymath}
  for some constants $C'$ and $C$, where the second inequality follows
  by iterating the Dharmadhikari--Fabian--Jogdeo inequality 
  \cite{dhar:fab:jog68} several times in order to reduce the
  dimensionality of the summation index. 
  
  Without loss of generality, assume that $\bn\ge1$ for all
  $\n\in\Nr$.  Fix $t\ge0$ and consider
  \begin{math}
    A_t=\{\n\in\Nr:\;\bn\le 2^t\}.
  \end{math}
  Pick $\n_t$ such that $A_t\subseteq\{\k:\k\le\n_t\}$.  For
  $\k\leq\n_t$, let $\Zk^*=\Zk$ if $\k\in A_t$ and $\Zk^*=0$
  otherwise, and denote their multiple sums by
  $\Sn^*=\sum_{\k\le\n}\Zk^*$.  These auxiliary random variables are
  needed to convert the summation domain to a rectangle. Finally, let 
  \begin{displaymath}
    \an^*=\incrementbigg{|\n|^{q-1}\sum_{\k\le\n}\E(\Zk^*)^{2q}}
    =\incrementbigg{|\n|^{q-1}\sum_{\k\le\n}\E\Zk^{2q}}, \qquad \n\in A_t.
  \end{displaymath}
  Note that $\an^*\geq0$, being the increment of the product of two
  monotonic fields, see \cite[Lemma~8.3]{kles14}.
  For $\n\notin A_t$, let $\an^*=0$.  Reasoning as above, we obtain 
  \begin{displaymath}
    \E{\max_{\n\in A_t}\Sn^{2q}} \le \E{\max_{\n\le\n_t} (\Sn^*)^{2q}}
    \le C \sum_{\n\le\n_t} \an^* = C \sum_{\n\in A_t}\an.
  \end{displaymath}
  The proof is completed by referring to \cite[Th.~8.3]{kles14}.
\end{proof}

\begin{remark}
  An analogue of Theorem~\ref{alpha>=2} for cumulative sums,
  i.e. in dimension $r=1$, goes back to Brunk \cite{brun48} and Prohorov
  \cite{proh50}. They proved that, if $\zeta_n=\xi_1+\dots+\xi_n$ are
  cumulative sums of independent random variables $\{\xi_i,i\geq1\}$
  and
  \begin{equation}
    \label{eq:6}
    \sum_{k=1}^\infty \frac{\E\xi_k^{2q}}{k^{q+1}}<\infty
  \end{equation}
  for $q\geq1$, then $(\zeta_n-\E\zeta_n)/n\to0$ a.s. as $n\to\infty$.
  The choice $q=1$ yields validity of 
  the Kolmogorov strong law of large numbers.
  A similar result can be proved for a normalization by an arbitrary
  increasing and unbounded sequence $\{b_k,k\geq1\}$ of positive
  numbers. Then 
  \begin{equation}
    \label{eq:7}
    \sum_{k=2}^\infty \frac{a_k}{b_k^{2q}}<\infty
  \end{equation}
  substitutes~\eqref{eq:6} as a sufficient condition for the strong
  law of large numbers, where 
  \begin{align}
    a_k&=k^{q-1}\sum_{j=1}^k\E\xi_j^{2q}-(k-1)^{q-1}
    \sum_{j=1}^{k-1}\E\xi_j^{2q}\notag
    \\
    &=k^{q-1}\E\xi_k^{2q}+O(1)k^{q-2}\sum_{j=1}^{k-1}\E\xi_j^{2q}.
    \label{sequence-if-r=1}
  \end{align}
  This sequence coincides with that given by~\eqref{sequence-if-r>1}
  in dimension $r=1$, where Theorem~\ref{alpha>=2} becomes the
  Brunk--Prohorov strong law of large numbers. 

  For this reason, Theorem~\ref{alpha>=2} can be called
  the Brunk--Prokhorov theorem for multiple sums. Other
  generalizations of the Brunk--Prokhorov theorem are obtained in
  \cite{lag09} and \cite{son:than13}. 
\end{remark}

\begin{corollary}
  \label{cor:equal-mean}
  Assume that independent centered random variables $\{\Zn,\n\in\Nr\}$
  have the same finite moment of order $2q$.  If $\field b$ is
  monotonic and \eqref{bn->infty} holds, then~\eqref{eq:slln:E=0}
  follows from
  \begin{equation}
    \label{condition-for-(2,infty)]}
    \sum_{\n\in\Nr} \frac {|\n|^{q-1}}{\bn^{2q}}<\infty. 
  \end{equation}
  In particular, \eqref{eq:slln:E=0} holds if, for some $\eps>0$,
  \begin{displaymath}
    \bn=|\n|^{1/2} \left(\log n_1\cdots\log n_r\right)^{1/(2q)+\eps}.
  \end{displaymath}
\end{corollary}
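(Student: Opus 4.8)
The plan is to deduce both claims from Theorem~\ref{alpha>=2}, whose standing hypotheses (monotonicity of $\field b$ and \eqref{bn->infty}) are precisely the ones assumed here. The only real work is to check that the abstract summability condition \eqref{eq:4} reduces to \eqref{condition-for-(2,infty)]} when all the moments $\E\Zn^{2q}$ coincide, say $\E\Zn^{2q}=c$ for every $\n$. In that case $\sum_{\k\le\n}\E\Zk^{2q}=c|\n|$, so the field whose increment defines $\an$ in \eqref{sequence-if-r>1} equals $c\,|\n|^{q-1}\cdot|\n|=c\,|\n|^q=c\,n_1^q\cdots n_r^q$.

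Next I would use that the increment operator $\increment{\cdot}$ is multiplicative on fields that split as a product over the coordinates: if $\bn=f_1(n_1)\cdots f_r(n_r)$ with the convention $f_i(0)=0$, then expanding the alternating sum in the definition of $\increment{\bn}$ gives
\begin{displaymath}
  \increment{\bn}=\prod_{i=1}^r\bigl(f_i(n_i)-f_i(n_i-1)\bigr).
\end{displaymath}
Taking $f_i(t)=t^q$ yields $\an=c\prod_{i=1}^r\bigl(n_i^q-(n_i-1)^q\bigr)$, and from $n_i^q-(n_i-1)^q=\int_{n_i-1}^{n_i}qt^{q-1}\,dt\le qn_i^{q-1}$ for $q\ge1$ (with equality to $1$ when $q=1$) we get $\an\le c\,q^r\,|\n|^{q-1}$. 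Hence
\begin{displaymath}
  \sum_{\n\in\Nr}\frac{\an}{\bn^{2q}}\le c\,q^r\sum_{\n\in\Nr}\frac{|\n|^{q-1}}{\bn^{2q}}<\infty
\end{displaymath}
by \eqref{condition-for-(2,infty)]}, and Theorem~\ref{alpha>=2} then gives \eqref{eq:slln:E=0}.

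For the explicit normalization $\bn=|\n|^{1/2}(\log n_1\cdots\log n_r)^{1/(2q)+\eps}$ --- read with the standard convention (e.g.\ $\log$ replaced by $1\vee\log$) that makes $\field b$ a positive monotonic field satisfying \eqref{bn->infty} even near the coordinate hyperplanes where some $n_i=1$ --- one computes $\bn^{2q}=|\n|^q\prod_{i=1}^r(\log n_i)^{1+2q\eps}$, so that
\begin{displaymath}
  \frac{|\n|^{q-1}}{\bn^{2q}}=\prod_{i=1}^r\frac1{n_i(\log n_i)^{1+2q\eps}}.
\end{displaymath}
The multiple series then factorizes into $r$ copies of $\sum_{n}\bigl(n(\log n)^{1+2q\eps}\bigr)^{-1}$, each convergent by the integral test since $1+2q\eps>1$; thus \eqref{condition-for-(2,infty)]} holds and the first part applies.

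There is no genuine obstacle here --- the corollary is a direct specialization of Theorem~\ref{alpha>=2}. The only points requiring a little care are the product formula for $\increment{\cdot}$ on factorized fields together with the elementary estimate $n_i^q-(n_i-1)^q\le qn_i^{q-1}$, and the convention under which the displayed $\bn$ is a legitimate (positive, monotonic, unbounded) normalization near the hyperplanes $n_i=1$.
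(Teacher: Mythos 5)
Your proof is correct and follows essentially the same route as the paper: specialize Theorem~\ref{alpha>=2}, use the product structure of the increment to get $\an=\E Z_1^{2q}\prod_{i=1}^r(n_i^q-(n_i-1)^q)$, and observe this is of order $|\n|^{q-1}$. You simply supply more detail than the paper's one-line proof (the explicit bound $n_i^q-(n_i-1)^q\le qn_i^{q-1}$ and the factorized verification of the logarithmic normalization), all of which is accurate.
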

\begin{proof}
  By \eqref{sequence-if-r>1}, 
  \begin{displaymath}
    \an=\incrementbigg{|\n|^{q-1}\sum_{\k\le\n} 1}\E Z_1^{2q}
    =\prod_{i=1}^r \left( n_i^{q}-(n_i-1)^{q}\right) \E Z_1^{2q} 
  \end{displaymath}
  is of the order $|\n|^{q-1}$.
\end{proof}

Thus, assuming the existence of sufficiently high moments for the
summands (so that $q$ becomes large), 
it is possible to bring the normalization 
to $|\n|^{1/2}$ times an arbitrarily small power of $\log
n_1\cdots\log n_r$.
If $q=1$, then condition \eqref{condition-for-(2,infty)]} becomes the
condition imposed in Corollary~\ref{cor:5.1} with $\alpha=2$. 

\subsection{Stationary case and martingale dependence}
\label{sec:stat-case-mart}

Now assume that $\field Z$ are stationary in the wide sense, that is
$\E\Zn=0$ for all $\n$, $\Zn$ is square integrable, and
$\E[Z_{\n+\k}Z_{\n}]=\E[Z_\k Z_0]=R(\k)$ for all $\n,\k\in\Nr$. Then
\begin{displaymath}
  \sum_{\n\in\Nr} \frac{|R(\n)|}{|\n|^2}
  (\log n_1)^2\cdots(\log n_r)^2<\infty 
\end{displaymath}
ensures the validity of the ergodic theorem for multiple sums meaning
the almost sure convergence of $\Sn/|\n|$ to a possibly random
limit, see \cite{gap81,kles81}.

Another possible generalization for the dependent case relies on the
martingale property of the field $\field S$ meaning that the
conditional expectation of $\Sn$ given the $\sigma$-algebra generated
by $\Sk$ with $\k\leq\m$ equals the value of the field at the
coordinatewise minimum of $\n$ and $\m$, see \cite{zak81}. Then 
$\Zn=\Delta[\Sn]$, $\n\in\Nr$, is the array of multivariate martingale
differences. The following result is the martingale version of
Corollary~\ref{cor:5.1}.

\begin{theorem}
\label{T5.1}
Let $\field b$ be monotonic and \eqref{bn->infty} hold.
If $\field Z$ is such that $\field S$ is a multiparameter martingale,
  $\E|\Zn|^\alpha<\infty$ for $\alpha\in(1,2]$ and all $\n$, and
  \begin{equation}
    \label{eq:5-second}
    \sum_{\n\in\Nr}\frac {\E|\Zn|^\alpha}{\bn^\alpha}<\infty,
  \end{equation}
  then \eqref{eq:slln:E=0} holds.
\end{theorem}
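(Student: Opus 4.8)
The plan is to mimic the proof of Theorem~\ref{thr:211} and Corollary~\ref{cor:5.1}, replacing the independence-based tools by their multiparameter martingale analogues. First I would truncate: set $\tr{\Zn}{\bn}=\Zn\one_{\{|\Zn|<\bn\}}$ and split $\Sn/\bn$ into the contribution of the truncated differences, the tail contribution, and the recentering term. The von Bahr--Esseen / Marcinkiewicz--Zygmund-type inequality for martingales in dimension one, iterated coordinate by coordinate exactly as the Dharmadhikari--Fabian--Jogdeo inequality was iterated in the proof of Theorem~\ref{alpha>=2}, should give a bound of the form $\E\max_{\k\le\n}|\Sk|^\alpha\le C\sum_{\k\le\n}\E|\Zk|^\alpha$ (for $\alpha\in(1,2]$), or at least a bound on $\E|\Sn|^\alpha$ that suffices after a dyadic blocking argument. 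Here one uses that the rectangular maximal function of a multiparameter martingale is controlled by Cairoli's maximal inequality, which requires the conditional-independence (commutation, ``$F4$'') property that is part of the definition of a multiparameter martingale adopted here via \cite{zak81}.

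Next, the convergence \eqref{eq:5-second} feeds into a Borel--Cantelli argument: $\sum_{\n}\Prob{|\Zn|\ge\bn}\le\sum_{\n}\E|\Zn|^\alpha/\bn^\alpha<\infty$, so $\Zn=\tr{\Zn}{\bn}$ eventually, almost surely, and it is enough to prove \eqref{eq:slln:E=0} for the truncated array. The centered truncated differences $\tr{\Zn}{\bn}-\E[\tr{\Zn}{\bn}\mid\salg_{\n-}]$ again form a multiparameter martingale-difference array, and the recentering tail $\sum_{\n}\bn^{-1}|\E[\tr{\Zn}{\bn}\mid\cdot]|$ is handled exactly as in Theorem~\ref{thr:211}: $|\E(\Zn\one_{\{|\Zn|\ge\bn\}})|\le\bn^{1-\alpha}\E|\Zn|^\alpha$ combined with the Kronecker-type Lemma~\ref{kron-gen}. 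For the martingale part, the dyadic decomposition along the levels $A_t=\{\n:\bn\le2^t\}$ used in the proof of Theorem~\ref{alpha>=2}, together with the maximal inequality above and \eqref{eq:5-second}, gives $\sum_t\E\max_{\n\in A_t}|\Sn^{\mathrm{trunc}}|^\alpha/2^{t\alpha}<\infty$, and then the convergence theorem \cite[Th.~8.3]{kles14} (or \cite[Cor.~8.1]{kles14}) yields the almost sure convergence to zero.

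The main obstacle I expect is the first step: establishing the correct multiparameter martingale maximal/moment inequality under \emph{only} the hypotheses packaged into ``$\field S$ is a multiparameter martingale'' as defined here. In one dimension the von Bahr--Esseen inequality $\E|\Sn|^\alpha\le 2\sum\E|\Zk|^\alpha$ for $1\le\alpha\le2$ is classical, but iterating it in $r$ dimensions requires that, after summing out the last coordinate, the resulting array is still a martingale in the remaining coordinates with respect to a suitable filtration — this is precisely where the commutation property of the $\sigma$-algebras (and hence Cairoli's inequality, valid for $\alpha>1$) is needed, and one must check it survives truncation. Once that inequality is in hand, the remainder of the argument is a routine transcription of the proofs of Theorems~\ref{thr:211} and~\ref{alpha>=2}; the restriction $\alpha>1$ (rather than $\alpha\ge1$) is exactly the range in which Cairoli's $L^\alpha$ maximal inequality holds, which explains the hypothesis $\alpha\in(1,2]$.
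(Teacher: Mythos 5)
Your proposal contains the paper's actual proof as its ``martingale part,'' but wraps it in a truncation layer that is both unnecessary and the source of the one genuine difficulty you flag without resolving. The paper's argument is shorter: it does \emph{not} truncate. It takes the dyadic blocks $A_t=\{\n:\bn\le 2^t\}$ and the auxiliary array $\Zk^*$ exactly as in the proof of Theorem~\ref{alpha>=2}, applies Wichura's multi-index Doob maximal inequality to get $\E\max_{\n\in A_t}|\Sn|^\alpha\le(\alpha/(\alpha-1))^{\alpha r}\E|S_{\n_t}^*|^\alpha$, then applies the von Bahr--Esseen inequality coordinate by coordinate to the \emph{original} differences to get $\E|S_{\n_t}^*|^\alpha\le 2^r\sum_{\k\in A_t}\E|\Zk|^\alpha$, and finishes by citing \cite[Th.~8.2]{kles14}. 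The point is that for $\alpha\in(1,2]$ the hypothesis $\E|\Zn|^\alpha<\infty$ is already exactly what von Bahr--Esseen consumes, so there is nothing to truncate; your diagnosis of why $\alpha>1$ is needed (Cairoli/Wichura's $L^\alpha$ maximal inequality) is correct and matches the paper.

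The truncation wrapper, besides being superfluous, is where your argument has a real gap. After truncating you must recenter, and in the multiparameter setting the recentered array $\tr{\Zn}{\bn}-\E[\tr{\Zn}{\bn}\mid\salg_{\n-}]$ is not automatically a martingale-difference array in \emph{each} coordinate separately (which is what the iteration of von Bahr--Esseen requires): subtracting the conditional expectation given one ``past'' $\sigma$-algebra does not kill the conditional expectations with respect to the $r$ one-parameter filtrations unless one verifies a commutation argument for the truncated variables, and the recentering term itself becomes random, so your bound $|\E(\Zn\one_{\{|\Zn|\ge\bn\}})|\le\bn^{1-\alpha}\E|\Zn|^\alpha$ would have to be replaced by an almost-sure statement about conditional expectations before Lemma~\ref{kron-gen} can be applied pathwise. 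None of this is fatal, but none of it is needed either: delete the truncation, apply the maximal inequality and von Bahr--Esseen directly, and your proof collapses to the paper's.
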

\begin{proof}
  Let the sets $A_t$, $t\ge0$, and multiindices $\n_t$, $t\ge0$, be
  defined as in the proof of Theorem~\ref{alpha>=2}.  Fix $t\ge0$ and
  let random variables $Z_k^*$, $\k\le\n_t$, and $S_n^*$, $\n\le\n_t$,
  be the same as in the proof of Theorem~\ref{alpha>=2}.  
  The multi-index generalization
  of Doob's maximal inequality (see Wichura~\cite{wic69}) yields that 
  \begin{displaymath}
    \E{\max_{\n\in A_t}|\Sn|^{\alpha}} \le \E{\max_{\n\le\n_t} |\Sn^*|^{\alpha}}
    \le \left(\frac{\alpha}{\alpha-1}\right)^{\alpha r}\E{|S_{\n_t}^*|^{\alpha}}.
  \end{displaymath}
  Since $\Sn^*$, $\n\le\n_t$, is a martingale in every coordinate of
  $\n$ when others are fixed, von Bahr--Esseen's
  inequality~\cite{Bahr-Esseen} yields that
  \begin{displaymath}
    \E{|S_{\n_t}^*|^{\alpha}}
    \le 2^r \sum_{k\le \n_t} \E |\Zn^*|^\alpha
    = 2^r \sum_{\k\in A_t} \E |\Zn|^\alpha.
  \end{displaymath}
  Combining the latter two inequalities, we complete the proof by
  referring to \cite[Th.~8.2]{kles14}.
\end{proof}

\begin{remark}
  \label{about-alpha=1}
  The case $\alpha=1$ can also be treated in the martingale setting
  and the conditions involve the moments $\E[|Z_\n|\log^+|Z_\n|]$.
  This is explained by the different form of the Doob's inequality for
  first moments of multiple sums, see \cite{wic69}, that includes a
  logarithmic term.
\end{remark}

\section{Ergodic theorems for random measures and point processes}
\label{sec:an-ergodic-theorem}

Let $S(\cdot)$ be a random measure defined on Borel sets in $\Rr$, see
\cite[Def.~9.1.VI]{dal:ver08}.  The random measure is called
\emph{stationary} if $S(\cdot)$ coincides in distribution with $S(\cdot+\x)$
for each translation $\x\in\Rr$.  In this case, $\E S(\cdot)$ (if finite)
is a translation invariant Borel measure on $\Rr$ and so is proportional to
the Lebesgue measure $\lambda$. The random measure $S$ is called
\emph{completely random} if its values on disjoint sets are
independent.

\subsection{Stationary random measures}
\label{sec:stat-rand-meas}

Denote by $\I$ the semi-open unit cube $(0,1]^r$ in $\Rr$.
The ergodic theorem \cite[Th.~12.2.IV]{dal:ver08} for stationary
random measures establishes that $S(A_n)/\lambda(A_n)$ converges
almost surely and in $L^1$ to $\E[S(\I)|\sT]$, where $\sT$ is the
$\sigma$-algebra of translation invariant events and $\{A_n,n\geq1\}$
is any convex averaging sequence. The latter means that $A_n$,
$n\ge1$, are nested convex sets such that the diameter of the largest
ball inscribed in $A_n$ tends to infinity. 

More general averaging sequences $\{A_n,n\ge1\}$ were considered in
\cite{temp72}. While it is rather difficult to handle general non-nested
sequences of sets, the following result gives an ergodic theorem for
the case of $A_n=[0,\x]$ being (non-nested) rectangles in $\R_+^r$.

\begin{theorem}
  \label{thr:z}
  Let $S$ be a stationary random measure such that
  \begin{displaymath}
    |S(\I)|(\log^+|S(\I)|)^{r-1}
  \end{displaymath}
  is integrable. Then
  \begin{displaymath}
    \frac{S([0,\x])}{\lambda([0,\x])}\to \E[S(\I)|\sT] \quad
    \text{a.s. as }\; \min(x_1,\dots,x_r)\to\infty\,.
  \end{displaymath}
\end{theorem}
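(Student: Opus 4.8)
The plan is to reduce the continuous statement to a strong law for the stationary random field of unit‑cube increments of $S$, then to pin down the value of the limit by looking along cubes, and finally to upgrade to convergence along arbitrary rectangles, the last step being where the moment hypothesis is actually spent. For $\x\in\R_+^r$ let $\n\in\Nr$ be the coordinatewise ceiling of $\x$, so that $\min(x_1,\dots,x_r)\to\infty$ is equivalent to $\n\to\infty$ in the min‑convergence sense, and put $Z_\k=S\big(\prod_{i=1}^r(k_i-1,k_i]\big)$ for $\k\in\Nr$. Stationarity of $S$ makes $\{Z_\k,\k\in\Nr\}$ a strictly stationary random field whose one‑dimensional marginal is the law of $S(\I)$, so that $|Z_\one|(\log^+|Z_\one|)^{r-1}$ is integrable and $S([0,\n])=\sum_{\k\le\n}Z_\k$. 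Along the nested convex averaging sequence of cubes $[0,m\one]$, $m\ge1$, the ergodic theorem for stationary random measures \cite[Th.~12.2.IV]{dal:ver08} gives $m^{-r}S([0,m\one])\to Y:=\E[S(\I)|\sT]$ almost surely.

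The crux is to show that $|\n|^{-1}S([0,\n])\to Y$ along \emph{all} $\n\to\infty$, not merely along cubes; cube‑sandwiching fails since $|\n|$ is comparable to neither $(\min_in_i)^r$ nor $(\max_in_i)^r$. For this I would invoke the multiparameter pointwise ergodic theorem along rectangles for the $\mathbb{Z}^r$‑shift $\theta$ on the probability space carrying $\{Z_\k\}$: for $f$ with $|f|(\log^+|f|)^{r-1}$ integrable, the averages $|\n|^{-1}\sum_{\k\le\n}f\circ\theta^\k$ converge almost surely as $\n\to\infty$ (Dunford, Zygmund). Applied to $f=Z_\one$, this yields almost sure convergence of $|\n|^{-1}S([0,\n])$ to an integrable random variable, which along the subsequence $\n=m\one$ must coincide with the limit found above, hence equals $Y$. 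The underlying mechanism is the weak‑type $L\log^{r-1}L$ bound for the strong (rectangular) maximal operator $M^*f=\sup_{\n\in\Nr}|\n|^{-1}\big|\sum_{\k\le\n}f\circ\theta^\k\big|$, obtained by iterating the one‑dimensional Hardy--Littlewood maximal inequality $r$ times and transferred to the dynamical setting by Calder\'on's transference principle; this is precisely why the moment hypothesis takes the form $\E[|S(\I)|(\log^+|S(\I)|)^{r-1}]<\infty$. When the increments $Z_\k$ happen to be independent --- for instance if $S$ is completely random, or a marked point process with independent marks over an independently scattered ground process --- this step is immediate from Smythe's strong law of large numbers \cite{smy73}, with $\sT$ trivial and $Y=\E S(\I)$.

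It remains to replace $[0,\n]$ by $[0,\x]$. For a nonnegative random measure this is immediate from monotonicity: $S([0,\lfloor\x\rfloor])\le S([0,\x])\le S([0,\lceil\x\rceil])$ while $\lambda([0,\lceil\x\rceil])/\lambda([0,\lfloor\x\rfloor])=\prod_{i=1}^r\lceil x_i\rceil/\lfloor x_i\rfloor\to1$ as $\min(x_1,\dots,x_r)\to\infty$, so dividing by $\lambda([0,\x])$ squeezes $S([0,\x])/\lambda([0,\x])$ between two ratios both tending to $Y$ by the previous paragraph. For a signed random measure one applies the same argument to the positive and negative parts $S^\pm$, which are again stationary random measures; here one needs $S^+(\I)$ and $S^-(\I)$ to satisfy the same moment condition, which I would add as a hypothesis (it is automatic when $S\ge0$, and in particular holds for random measures built from nonnegative marks).

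I expect the main obstacle to be the second step. Convergence of the averages of a stationary field along a nested convex averaging sequence --- cubes, balls, dilates of a fixed convex body --- is classical and requires only integrability; it is the passage to arbitrary, non‑nested rectangles $[0,\x]$ that forces the controlling operator to be the strong maximal function, which is unbounded on $L^1$, and the hypothesis $\E[|S(\I)|(\log^+|S(\I)|)^{r-1}]<\infty$ is exactly the $L\log^{r-1}L$ integrability that restores a weak‑type maximal inequality and thereby almost sure convergence. Supplying a clean self‑contained argument for this step, rather than quoting the Dunford--Zygmund theorem, is the part that would require the most care.
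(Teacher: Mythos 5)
Your proof is correct and rests on the same engine as the paper's: the Zygmund--Dunford multiparameter ergodic theorem for rectangular averages under $L(\log^+L)^{r-1}$ integrability, combined with a sandwich argument to pass from a countable family of rectangles to all of them. The paper's (very terse) proof applies the continuous-parameter version of that theorem directly to the stationary field $u\mapsto S(\I+u)$, squeezing $S([0,\x])$ between the integrals of $S(\I+u)$ over $[0,\x-(1,\dots,1)]$ and over $[0,\x]$; this makes your discretization to $Z_\k$, the separate identification of the limit along cubes, and the extra moment hypothesis on $S^{\pm}$ unnecessary (a random measure in the sense of \cite[Def.~9.1.VI]{dal:ver08} is nonnegative, so monotonicity is available for free and the invariant $\sigma$-algebra of the continuous action is $\sT$ itself).
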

\begin{proof}
  The value of $S([0,\x])$ can be bounded above and below using the
  integrals of $S(\I+u)$ over $u$ from $[0,\x]$ and $[0,\x-(1,\dots,1)]$,
  respectively. The convergence of these integrals is ensured by 
  the Zygmund multivariate ergodic theorem \cite{zyg51}, see also
  \cite[Th.~10.12]{kal02}. 
\end{proof}

In the discrete version of Theorem~\ref{thr:z}, the min-convergence
can be replaced by the max-convergence, that is 
\begin{equation} 
  \label{eq:5}
  \frac{S([0,\n])}{|\n|}\to \E[S(\I)|\sT] \quad
  \text{a.s. as }\; |\n|\to\infty\,,
\end{equation}
see \cite[Prop.~A.2]{kles14}. In the following, assume that the random
measure is ergodic (or metrically transitive), so that $\sT$ is
trivial and we obtain the unconditional expectation as the limit. This
is the case if $S$ is completely random. 

We say that the random measure $S$ satisfies the strong law of large
numbers with normalization $\field b$ if
\begin{equation}
  \label{eq:slln-measure}
  \frac{S([0,\n])-\E S([0,\n])}{\bn}\to 0 \quad
  \text{a.s. as }\; |\n|\to\infty\,.
\end{equation}

Smythe's strong law of large numbers for multiple sums~\cite{smy73}
implies that~\eqref{eq:5} holds for a stationary completely random
measure under the same assumption on the logarithmic moment of $S(\I)$
as in Theorem~\ref{thr:z}. If also $\E S(A)=0$ for all Borel $A$,
Corollary~\ref{cor:5.1} yields that, if $\E |S(\I)|^\alpha<\infty$
with $\alpha\in[1,2]$, then~\eqref{eq:slln-measure} holds if $\sum
\bn^{-\alpha}$ converges. By Corollary~\ref{cor:equal-mean},
\eqref{eq:slln-measure} also holds if $S(A)$ is $(2q)$-integrable with
integer $q\geq1$ and \eqref{condition-for-(2,infty)]} is satisfied,
which is then the Brunk--Prohorov theorem for stationary completely
random measures.

The strong law of large numbers for second order stationary random
fields with the discrete parameter (see Klesov~\cite{kles81},
Gaposhkin~\cite{gap81}) can be also applied in this setting.

\subsection{Random measures generated by marked point processes}
\label{sec:rand-meas-gener}

An important family of random measures is generated by marked point
processes.  Let $\eta=\{(\x_i,y_i),i\geq1\}$ be a \emph{marked point
  process} in $\Rr$, that is $\eta$ can be viewed as a locally finite
set of pairs $(\x_i,y_i)$, where $\x_i$ is a point in $\Rr$ and $y_i$ is
a real number regarded as the mark of $\x_i$, see
\cite[Def.~6.4.I]{dal:ver03}. A marked point process can be also
defined as a non-marked point process in $\Rr\times\R$.  
Let
\begin{displaymath}
  S(A)=\sum_{i:\; \x_i\in A} y_i
\end{displaymath}
be the sum of marks for the points located in a Borel set $A$. So
defined random measure $S$ is completely random if and only if
$\{\x_i,i\geq1\}$ form a Poisson process and the conditional
distribution of $y_i$ is specified by a kernel $P(\cdot|\x_i)$, see
\cite[Prop.~10.1.VI]{dal:ver08}. Write $\E(y^2|\x)$ for the second
moment of $y$ sampled from $P(\cdot|\x)$ (assuming this moment is
finite) and denote by $\Lambda$ the
intensity measure of the Poisson process $\{\x_i,i\geq1\}$.

The strong law of large numbers of the type \eqref{eq:slln-measure}
follows from the strong law of large numbers for the partial sums of
the discrete random field $\Zn=S(\Cn)$, where 
\begin{displaymath}
  \Cn=\I+\n-(1,\dots,1),\quad \n\in\Nr\,,
\end{displaymath}
are cubes partitioning $\Rr$.

\begin{remark}
  The above construction of $S(A)$ follows the modern theory of point
  processes, see \cite{dal:ver08}.  Instead of using the definition of
  a marked point process, Smythe~\cite{smy75} considered a point
  process $\{\x_i,i\geq1\}$ and sequences of i.i.d. random variables
  $\{y_{i,\n};i\ge1\}$ which allocate marks to the points lying inside
  $\Cn$; he assumed that the marks are independent of the points
  $\{\x_i,i\geq1\}$ and between different $\Cn$. Then $\Zn$ becomes the
  sum of the marks for points $\x_i\in\Cn$. This situation is a special
  cases of our setting. 
\end{remark}

In view of centering involved in \eqref{eq:slln-measure}, it is
possible to assume that $\E(y|\x)=0$ for all $\x$. Then 
\begin{displaymath}
  \E S(\Cn)=\E \sum_{i,j\geq1}
  y_iy_j\one_{\x_i\in\Cn}\one_{\x_j\in\Cn}
  =\int_{\Cn} \E(y^2|\x) \Lambda(d\x).  
\end{displaymath}
Corollary~\ref{cor:5.1} with $\alpha=2$ 
yields that \eqref{eq:slln-measure} holds with
a monotonic $\{b_\n,\n\in\Nr\}$ satisfying \eqref{bn->infty} if 
\begin{displaymath}
  \sum_{\n\in\Nr} b_\n^{-2} \int_{\Cn} \E(y^2|\x) \Lambda(d\x) <\infty.
\end{displaymath}
In particular, if the marks are independent of the positions, then this
condition turns into convergence of the series $\sum b_\n^{-2}
\Lambda(\Cn)$. A similar reasoning applies for moments of order $(2q)$
involved in the Brunk--Prohorov strong law of large numbers, but the
conditions become less transparent. 

\section{The Kronecker lemma for multiple series}
\label{sec:kron-lemma-mult-2}

In fact, a generalization of Kronecker's lemma for $\Nr$ is
valid only for nonnegative terms and thus the proof of
\cite[Th.~2.1.1]{smy75} is not complete.  We fill the gap in its proof
below by proving a generalization of Kronecker's lemma for multiple
sums. 

\begin{lemma}
  \label{kron-gen}
  Assume that $\field x$ are non-negative numbers, and $\field b$ is
  monotonic and tends to infinity as $\n\to\infty$ (respectively, as
  $|\n|\to\infty$).  If the series
  \begin{displaymath}
    \sum_{\n\in\Nr} \frac {\xn}{\bn}
  \end{displaymath}
  converges, then
  \begin{equation}
    \label{eq:1}
    \frac1{\bn}\sum_{\k\le\n} \xk\to0 \quad \text{as }\; \n\to\infty\;
    (\text{resp.}\; |\n|\to\infty).
  \end{equation}
\end{lemma}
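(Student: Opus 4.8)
The plan is to reduce the multivariate Kronecker lemma to a summation-by-parts argument analogous to the classical one-dimensional proof, exploiting the non-negativity of the $\xn$ in an essential way. Write $t_{\n}=\sum_{\k\le\n}\xk/\bk$ for the partial sums of the convergent series, so that $t_{\n}\to t$ for some finite limit $t$ as $\n\to\infty$ (resp.\ $|\n|\to\infty$); in particular the increments $\Delta[t_{\n}]=\xn/\bn$ are non-negative. The key identity is the multivariate Abel summation formula, which expresses $\sum_{\k\le\n}\xk=\sum_{\k\le\n}\bk\,\Delta[t_{\k}]$ in terms of $\b_{\n}$, the partial sums $t_{\k}$, and the increments $\Delta[\bn]$ of the weight field. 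Concretely, one obtains an expansion of the form
\begin{displaymath}
  \sum_{\k\le\n}\xk = \sum_{\emptyset\ne I\subseteq\{1,\dots,r\}} (-1)^{|I|+1}
  \sum_{\substack{\k\le\n}} t_{\k^{(I)}}\,\big(\text{increment of }\b\text{ in the coordinates of }I\text{ at }\k\big),
\end{displaymath}
where $\k^{(I)}$ replaces the coordinates outside $I$ by the corresponding coordinates of $\n$; the precise bookkeeping is the routine part. Dividing by $\bn$ and using monotonicity of $\field b$, each term is controlled: the telescoping sums of increments of $\b$ collapse to differences of values of $\b$, all bounded by $\bn$, while the factors $t_{\k^{(I)}}$ are uniformly bounded (being a convergent net) and converge to $t$.

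First I would treat the genuinely one-dimensional heart of the argument: for a fixed tail, split the sum $\frac1{\bn}\sum_{\k\le\n}\xk$ into the contribution of indices $\k$ with some coordinate small (say $\le N$ in that coordinate) and the contribution of indices that are large in all coordinates. The first contribution is handled because $\bn\to\infty$ (in the appropriate convergence mode) while the numerator over such a "thin slab" is bounded by a fixed convergent quantity times $\max_{\k}\bk$ restricted to that slab, which is $o(\bn)$. For the second contribution, on the region where all coordinates exceed $N$ the partial sums $t_{\k}$ are within $\eps$ of $t$, so $\xk/\bk = \Delta[t_{\k}]$ has increments that are a small perturbation of zero; feeding this into the Abel expansion and again using monotonicity of $\b$ to bound the telescoped weight-differences by $\bn$ yields a bound of order $\eps$. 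Letting $\eps\to0$ after $N\to\infty$ finishes the estimate.

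The distinction between the two convergence modes ($\n\to\infty$ versus $|\n|\to\infty$) enters only through the hypothesis on $\b$: in the max-convergence case the "thin slab" where one coordinate stays $\le N$ can still have $\bn$ large (since $|\n|\to\infty$ forces the other coordinates, hence $\bn$ via $\bn\to\infty$, to blow up), so the same splitting works verbatim; I would simply carry the argument with the weaker hypothesis and note the max-convergence version as the special instance needed for Theorem~\ref{thr:211}.

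The main obstacle I anticipate is the combinatorial bookkeeping in the multivariate Abel summation formula together with the sign control: unlike the one-dimensional case, the expansion has $2^r-1$ boundary-type terms with alternating signs, and one must verify that after division by $\bn$ every one of them is either $o(1)$ (slab terms, killed by $\bn\to\infty$) or $O(\eps)$ (bulk terms, killed by $|t_{\k}-t|<\eps$ and monotonicity of $\b$). The non-negativity of $\xn$ is precisely what guarantees that the increments $\Delta[t_{\k}]$ keep a fixed sign, which is what makes the telescoping bounds on the weight-differences go through with the correct sign; without it the cross terms need not be dominated by $\bn$, and this is exactly where the "natural" generalization of the lemma fails, consistent with the counterexample the paper promises in this section.
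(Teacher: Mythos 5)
Your overall strategy (split off a region of ``small'' indices, control the rest via convergence of the series) is the right one, but the decomposition you chose does not close, and the step where it breaks is the slab estimate. You bound the contribution of a slab $\{\k\le\n:\ k_j\le N\}$ by $\bigl(\sum \xk/\bk\bigr)\cdot\max\bk$, both taken over the slab, and then claim that this maximum is $o(\bn)$. Monotonicity gives only $\max\bk\le\bn$ over the slab, and in general nothing better: take $r=2$ and $\bn=n_1+n_2$ (monotonic and tending to infinity in both convergence modes); the maximum of $\bk$ over $\{k_1\le N,\ \k\le\n\}$ is $N+n_2$, which is asymptotic to $\bn$ when $n_2\gg n_1$ and equals $\bn$ outright in the max-convergence regime where $n_1$ stays bounded. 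Since the series restricted to such a slab need not have a small sum either (all the mass of $\sum\xk/\bk$ may sit on $\{k_1=1\}$), your bound for the slab term is $O(1)$ rather than $o(1)$. The remark in your last paragraph that $|\n|\to\infty$ ``forces the other coordinates to blow up'' is also incorrect: $|\n|\to\infty$ is compatible with $n_1\equiv 1$.

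The repair is to split differently, which is what the paper's direct argument does: separate the \emph{finite} set $\{\k:\ |\k|\le n_0\}$ (equivalently one could use $\{\k\le(N,\dots,N)\}$) from its complement inside $\{\k\le\n\}$. The finite part contributes a fixed number divided by $\bn\to\infty$. On the complement one writes $\xk=(\xk/\bk)\,\bk\le(\xk/\bk)\,\bn$ using monotonicity and non-negativity, so that $\bn^{-1}\sum\xk$ over the complement is at most the tail $\sum_{|\k|>n_0}\xk/\bk$, which tends to $0$ as $n_0\to\infty$ because these finite sets exhaust $\Nr$ and the non-negative series converges. This two-line estimate also makes the multivariate Abel-summation superstructure, whose $2^r-1$ boundary terms you leave as ``routine bookkeeping,'' unnecessary: non-negativity enters only through the termwise inequality $\bk/\bn\le1$ and the monotone exhaustion of the tail, not through any sign control of telescoped increments.
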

\begin{proof}
  The statement for the convergence as $|\n|\to\infty$
  coincides with~\cite[Prop.~A.9]{kles14}.  The case of
  convergence as $\n\to\infty$ is literally the same until the very
  last line of the proof, where one refers to $b_\n\to\infty$ as
  $\n\to\infty$ rather than to $b_\n\to\infty$ as $|\n|\to\infty$,
  see also \cite[Lemma~2.3.1]{khos02}.

  For a direct argument, use the non-negativity condition to deduce
  that, for large $n$,  
  \begin{displaymath}
    \frac1{\bn}\sum_{\k\le\n} \xk
    =\frac1{\bn}\sum_{\k\le\n} \frac{\xk}{\bk}\bk
    \leq \frac1{\bn}\sum_{|\k|\le n_0} \xk
    +\sum_{|\k|>n_0} \frac{\xk}{\bk}\bk\,. \qedhere
  \end{displaymath}
\end{proof}

\begin{remark}
  If $r=1$, then the non-negativity condition in Lemma~\ref{kron-gen}
  is not needed, since it coincides with the standard Kronecker lemma
  in this case.  
\end{remark}

\begin{remark}
  The non-negativity assumption on $\xn$ is essential as the following
  two-dimensional example shows. Let $\bn=n_1n_2$ for $r=2$. Then
  $\Delta[\bn]=1$ for all $\n$. Define
  \begin{displaymath}
    \x_{k_1k_2}=
    \begin{cases}
      -k_2, & k_1=1, \\ 2k_2, & k_1=2, \\ 0, & k_1>2.
    \end{cases}
  \end{displaymath}
  For all $n_1\ge2$ and $n_2\ge1$, we have
  \begin{displaymath}
    \sum_{k_1=1}^{n_1}\sum_{k_2=1}^{n_2}
    \frac{\x_{k_1k_2}}{\b_{k_1k_2}}=0,
  \end{displaymath}
  whence the double series
  \begin{displaymath}
    \sum_{k_1,k_2=1}^\infty \frac{\x_{k_1k_2}}{\b_{k_1k_2}}
  \end{displaymath}
  converges to zero for any reasonable definition of the convergence
  in $\N^2$.  However, the sequence
  \begin{displaymath}
    \frac{1}{\b_{n_1n_2}}
    \sum_{k_1=1}^{n_1}\sum_{k_2=1}^{n_2} \x_{k_1k_2}
    =\frac {n_2+1}{2n_1},
    \qquad n_1\ge2, \ n_2\ge1,
  \end{displaymath}
  has no limit for any reasonable definition of the convergence of
  $\n$ to infinity. 
\end{remark}

\section*{Acknowledgements}

This work has been supported by the Swiss National Science Foundation
Scopes Programme Grant IZ7320\_152292. The comments of the referees
helped to eliminate occasional misprints and have led to numerous
improvements in the presentation.


\end{document}